\newtheorem{teo}{Theorem}
\newtheorem{lemma}{Lemma}
\newtheorem{prop}{Proposition}
\newtheorem{oss}{Remark}
\theoremstyle{remark}
\newtheorem{es}{\textbf{Example}}
\title{Periodic representations and rational approximations of square roots}
\author{Marco Abrate, Stefano Barbero, Umberto Cerruti, Nadir Murru}
\date{}
\begin{document}
\maketitle

\begin{abstract}
In this paper the properties of R\'edei rational functions are used to derive rational approximations for square roots and both Newton and Pad\'e approximations are given as particular cases. Moreover, R\'edei rational functions are introduced as convergents of particular periodic continued fractions and are applied for approximating square roots in the field of $p$--adic numbers and to study periodic representations. Using the results over the real numbers, we show how to construct periodic continued fractions and approximations of square roots which are simultaneously valid in the real and in the $p$--adic field.
\end{abstract}

\section{Introduction}
Diophantine approximation is a very rich research field and it is actually very studied and developed. The research of rational approximations for irrational numbers can be performed in many different ways. Continued fractions are the most used objects in this context, since they have many important approximation properties (e.g., they provide the best approximations for irrational numbers). Recently, different kind of matrices has been used for finding approximations of irrational numbers. For example in \cite{Wild} some $2\times2$ matrices are used in order to generate an infinite number of solutions of the Pell equation and in this way we have infinite approximations of square roots. Applying matrix powers techniques in this context is very useful. Since the entries of a power matrix recur with the characteristic polynomial of the starting matrix, we get another important tools in this subject, based on the deep theory about linear recurrent sequences. In \cite{Rosen} the powers of matrices
\begin{equation} \label{matrices} \begin{pmatrix} z & d \cr 1 & z  \end{pmatrix}\quad\text{and} \quad \begin{pmatrix} z+1 & d \cr 1 & z+1  \end{pmatrix}\ , \end{equation}
yield to approximations for $\sqrt{d}$, where $z=\lfloor \sqrt{d} \rfloor$. These approximations are related to continued fractions and minus continued fractions. In this paper we will see that they coincide with the R\'edei rational functions \cite{Redei}. R\'edei rational functions (see \cite{Lidl} for a good survey) arise from the expansion of $(z+\sqrt{d})^n$, where $z$ is an integer and $d$ is a nonsquare positive integer. The explicit expression for this expansion is 
\begin{equation}\label{pow}(z+\sqrt{d})^n=N_n(d,z)+D_n(d,z)\sqrt{d}\ ,\end{equation}
where
$$ N_n(d,z)=\sum_{i=0}^{[n/2]}\binom{n}{2i}d^iz^{n-2i}\quad \text{and} \quad D_n(d,z)=\sum_{i=0}^{[n/2]}\binom{n}{2i+1}d^iz^{n-2i-1}.  $$
The R\'edei rational functions $Q_n(d,z)$ are defined by 
\begin{equation}\label{red}Q_n(d,z)=\cfrac{N_n(d,z)}{D_n(d,z)}, \quad \forall n\geq1 \ .\end{equation}
R\'edei rational functions are very useful in many aspects of number theory. Some of their application are concerned with diophantine approximations, public key cryptographic system \cite{Nob} and generation of pseudorandom sequences \cite{Topu}. Furthermore, given a finite field $\mathbb F_q$, of order $q$, and $\sqrt{d}\not\in\mathbb F_q$, then $Q_n(d,z)$ is a permutation of $\mathbb F_q$ if and only if $(n,q+1)=1$ (see \cite{Lidl}, p. 44). Moreover, they provide approximations for square roots and they have some connections with continued fractions: it is straightforward to see that
$$\lim_{n\rightarrow\infty}Q_n(d,z)=\sqrt{d}, \quad \forall d,z\in\mathbb Z,$$
$d$ positive, not square. In \cite{Pell} the authors found a value for $d$ such that R\'edei rational functions coincide with the convergents of the continued fraction of $\sqrt{d}$ leading to the solutions of the Pell equation. Furthermore these functions have been generalized in order to study them over a general class of conics and develop rational approximations of irrational numbers over conics, obtaining a new result for quadratic irrationalities approximations \cite{Pellhd}. Now, we show how R\'edei rational functions are related with the approximations studied in \cite{Rosen}. We recall their matricial representation (see \cite{Von}).
\begin{prop} \label{matrix-redei} For every $d,z \in \mathbb Z$, $d$ positive nonsquare:
$$ \begin{pmatrix} z & d \cr 1 & z  \end{pmatrix}^n=\begin{pmatrix} N_n & dD_n \cr D_n & N_n \end{pmatrix}. $$
\end{prop}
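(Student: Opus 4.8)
The plan is to proceed by induction on $n$, exploiting the fact that the powers of $M=\begin{pmatrix} z & d \\ 1 & z \end{pmatrix}$ reproduce exactly the recurrence satisfied by the coefficients $N_n=N_n(d,z)$ and $D_n=D_n(d,z)$ arising from \eqref{pow}. For the base case $n=1$, expanding $(z+\sqrt{d})^1=z+\sqrt{d}$ gives $N_1=z$ and $D_1=1$, so that $\begin{pmatrix} N_1 & dD_1 \\ D_1 & N_1 \end{pmatrix}=\begin{pmatrix} z & d \\ 1 & z \end{pmatrix}=M$, as required.

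For the inductive step, I would assume $M^n=\begin{pmatrix} N_n & dD_n \\ D_n & N_n \end{pmatrix}$ and compute $M^{n+1}=M^n M$, reading off its four entries. This produces $zN_n+dD_n$ in the top-left and bottom-right positions, and $N_n+zD_n$ in the bottom-left (respectively $d(N_n+zD_n)$ in the top-right) position. The claim then reduces to checking that the coefficients genuinely satisfy the two recurrences $N_{n+1}=zN_n+dD_n$ and $D_{n+1}=N_n+zD_n$. To establish these, I would expand
$$
(z+\sqrt{d})^{n+1}=(N_n+D_n\sqrt{d})(z+\sqrt{d})=(zN_n+dD_n)+(N_n+zD_n)\sqrt{d},
$$
and compare with $(z+\sqrt{d})^{n+1}=N_{n+1}+D_{n+1}\sqrt{d}$. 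Since $d$ is a nonsquare, $\sqrt{d}$ is irrational, so the representation $a+b\sqrt{d}$ with $a,b\in\mathbb{Q}$ is unique; matching rational and irrational parts yields precisely the two recurrences and closes the induction.

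The computation is entirely routine, so there is no genuine obstacle; the only point requiring minor care is that the symmetric form of $M^n$ is preserved, i.e.\ one must confirm that a single value $N_{n+1}$ occupies both diagonal entries and that $dD_{n+1}$ and $D_{n+1}$ occupy the two off-diagonal entries consistently. This is automatic here because the same two scalar recurrences govern all four positions. An alternative, non-inductive route would be to diagonalize $M$: its eigenvalues are $z\pm\sqrt{d}$, with eigenvectors having first coordinate $\pm\sqrt{d}$ and second coordinate $1$, so that the entries of $M^n$ can be expressed directly through $(z\pm\sqrt{d})^n=N_n\pm D_n\sqrt{d}$; this gives the result immediately, at the cost of inverting the change-of-basis matrix.
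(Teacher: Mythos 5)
Your proof is correct. The paper itself does not prove this proposition --- it is simply recalled from the literature (the citation to von zur Gathen) --- so there is no in-text argument to compare against. Your induction is the standard one: the base case $N_1=z$, $D_1=1$ is immediate from $(z+\sqrt{d})^1=z+\sqrt{d}$, and the inductive step correctly reduces to the recurrences $N_{n+1}=zN_n+dD_n$ and $D_{n+1}=N_n+zD_n$, which you justify by expanding $(z+\sqrt{d})^{n+1}=(N_n+D_n\sqrt{d})(z+\sqrt{d})$ and invoking uniqueness of the representation $a+b\sqrt{d}$ for $d$ nonsquare. You also rightly observe that the symmetric shape of $M^{n+1}$ comes out automatically from the matrix product, and your alternative via diagonalization with eigenvalues $z\pm\sqrt{d}$ is equally valid and in fact mirrors the identity $(z\pm\sqrt{d})^n=N_n\pm D_n\sqrt{d}$ that underlies the whole construction. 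No gaps.
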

The matrix used in the previous Proposition coincides with the matrices (\ref{matrices}) when $z=\lfloor\sqrt{d}\rfloor$ and $z=\lceil \sqrt{d} \rceil$ respectively, but we can observe that the matrix 
$$\begin{pmatrix} z & d \cr 1 & z  \end{pmatrix}$$
provides approximations of $\sqrt{d}$ for every choice of the integer $z$. \\
The previous proposition yields a recurrence relation for the R\'edei polynomials, because the entries of a matrix power recur with the characteristic polynomial of the matrix. In this case the starting matrix has trace $2z$ and determinant $z^2-d$ and we have \begin{equation}\label{recred}
\begin{cases}(N_n(d,z))_{n=0}^{+\infty}=\mathcal{W}(1,z,2z,z^2-d)\cr
(D_n(d,z))_{n=0}^{+\infty}=\mathcal{W}(0,1,2z,z^2-d)\ .\end{cases}
\end{equation}
We indicate with $(c_n)_{n=0}^{+\infty}=\mathcal{W}(a,b,h,k)$ the linear recurrent sequence of order 2 with initial conditions $a$, $b$ and characteristic polynomial $t^2-ht+k$, i.e.,
$$\begin{cases} c_0=a \cr c_1=b \cr c_n=hc_{n-1}-kc_{n-2}, \quad \forall n\geq2 \ .   \end{cases}$$
In the next sections we deal with the R\'edei rational functions and we point out how they provide rational approximations for square roots. We will show that both Newton and Pad\'e approximations can be derived as particular cases. Moreover, R\'edei rational functions will be introduced in a totally new way as convergents of particular periodic continued fractions. Afterwards the study of approximations of irrationalities over the field of $p$--adic numbers is also considered. Many attempts of generalizations of continued fractions over the $p$--adic numbers have been performed, starting from Mahler \cite{Mal}. In \cite{Bro}, \cite{Lao}, and \cite{Moore} several algorithms which generalize the continued fractions over the $p$--adic numbers and a complete bibliography of the argument are showed. However, no algorithm has been found such that it always produces a periodic representation for every square root in the field of $p$--adic numbers. In the last section of this paper we use R\'edei rational functions for approximating square roots in the field of $p$--adic numbers and we study periodic representations. Using the results over the real numbers, we see how it is possible to construct periodic continued fractions and approximations of square roots which are simultaneously valid in the real and in the $p$--adic field.

\section{R\'edei rational functions and continued fractions}
A continued fraction is a representation of a real number $\alpha$ through a sequence of integers as follows:
$$\alpha=a_0+\cfrac{1}{a_1+\cfrac{1}{a_2+\cfrac{1}{a_3+\cdots}}}\ ,$$
where the integers $a_0,a_1,...$ can be evaluated with the recurrence relations 
$$\begin{cases} a_k=[\alpha_k]\cr \alpha_{k+1}=\cfrac{1}{\alpha_k-a_k} \quad \text{if} \ \alpha_k \ \text{is not an integer}  \end{cases}  \quad k=0,1,2,...$$
for $\alpha_0=\alpha$ (cf. \cite{Olds}). A continued fraction can be expressed in a compact way using the notation $[a_0,a_1,a_2,a_3,...]$. The finite continued fraction
$$[a_0,...,a_n]=\cfrac{p_n}{q_n}\ ,\quad n=0,1,2,...$$
is a rational number and is called the $n$--th \emph{convergent} of $[a_0,a_1,a_2,a_3,...]$. An important property of continued fractions involves quadratic irrationalities. A continued fraction is periodic if and only if it represents a quadratic irrationality. However, the period of such continued fractions can be very long and it is not possible to predict its length.\\
In this section we focus on a particular continued fraction with rational partial quotients which ever represents a square root. Using relations (\ref{recred}) we can easily prove that the R\'edei rational functions correspond to the convergents of the continued fractions
\begin{equation} \label{fcredei}  \sqrt{d}=\left[\ z,\overline{\cfrac{2z}{d-z^2},2z}\ \right].  \end{equation}
\begin{lemma} \label{fcr-conv} Let $\left[\cfrac{a_0}{b_0}\ ,\cfrac{a_1}{b_1}\ ,...,\cfrac{a_i}{b_i}\ ,...\right]$ be a continued fraction, $a_i, b_i \in \mathbb Z$ for $i=0,1,...$, and let $(p_n)_{n=0}^{+\infty},(q_n)_{n=0}^{+\infty}$ be the sequences of numerators and denominators of the convergents. Let us consider the sequences $(s_n)_{n=0}^{+\infty},(t_n)_{n=0}^{+\infty},(u_n)_{n=0}^{+\infty}$ defined by
\[  \begin{cases}  s_n=a_ns_{n-1}+b_nb_{n-1}s_{n-2} \cr t_n=a_nt_{n-1}+b_nb_{n-1}t_{n-2} \cr u_n=b_nu_{n-1}\ , \end{cases} \]
for every $n\geq2$, with initial conditions 
$$\begin{cases} s_0=a_0,s_1=a_0a_1+b_0b_1 \cr t_0=1,t_1=a_1 \cr u_0=1\ .\end{cases}$$
Then we have $p_n=\cfrac{s_n}{b_0u_n}$ and $q_n=\cfrac{t_n}{u_n}\ ,$ for every $n\geq0$. 
\end{lemma}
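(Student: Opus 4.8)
The plan is to proceed by induction on $n$, relying on the fundamental recurrence satisfied by the convergents of any continued fraction, which remains valid when the partial quotients are not integers. Writing $c_k = a_k/b_k$ for the $k$-th partial quotient, the numerators and denominators obey $p_n = c_n p_{n-1} + p_{n-2}$ and $q_n = c_n q_{n-1} + q_{n-2}$ for $n\geq 2$, with $p_0 = c_0$, $q_0 = 1$, $p_1 = c_0 c_1 + 1$, $q_1 = c_1$. I would first rewrite these initial data in terms of $a_k, b_k$, obtaining $p_0 = a_0/b_0$, $q_0 = 1$, $p_1 = (a_0 a_1 + b_0 b_1)/(b_0 b_1)$, and $q_1 = a_1/b_1$.

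A preliminary observation simplifies everything. The recurrence $u_n = b_n u_{n-1}$ together with $u_0 = 1$ yields the closed form $u_n = b_1 b_2 \cdots b_n$ (in particular $u_1 = b_1$). Consequently $u_{n-1} = u_n/b_n$ and $u_{n-2} = u_n/(b_{n-1} b_n)$, and these two identities are exactly what will let me collapse the mixed denominators appearing in the convergent recurrence into the single denominator $b_0 u_n$ for $p_n$ and $u_n$ for $q_n$.

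For the induction, the base cases $n=0$ and $n=1$ follow by direct substitution into the stated initial conditions, using $u_0 = 1$ and $u_1 = b_1$: one checks $s_0/(b_0 u_0) = a_0/b_0 = p_0$ and $t_0/u_0 = 1 = q_0$, and similarly at $n=1$. For the inductive step I would assume the formulas at $n-1$ and $n-2$, substitute $p_{n-1} = s_{n-1}/(b_0 u_{n-1})$ and $p_{n-2} = s_{n-2}/(b_0 u_{n-2})$ into $p_n = (a_n/b_n) p_{n-1} + p_{n-2}$, and replace $u_{n-1}, u_{n-2}$ by $u_n/b_n$ and $u_n/(b_{n-1} b_n)$. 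After clearing, the two terms become $a_n s_{n-1}/(b_0 u_n)$ and $b_n b_{n-1} s_{n-2}/(b_0 u_n)$, whose sum is precisely $s_n/(b_0 u_n)$ by the defining recurrence for $s_n$. The computation for $q_n$ is identical with $t$ in place of $s$ and without the factor $b_0$.

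The argument is essentially routine; the only point requiring care is the bookkeeping of the $b_k$ factors, that is, tracking how $u_{n-1}$ and $u_{n-2}$ relate to $u_n$ so that the cross term acquires exactly the coefficient $b_n b_{n-1}$ prescribed in the definitions of $s_n$ and $t_n$. I would also flag a small conceptual subtlety: for rational partial quotients the $p_n, q_n$ generated by the fundamental recurrence are rational numbers rather than coprime integers, so the content of the lemma is that $s_n$, $t_n$, and $b_0 u_n$ provide an explicit integral representation of these rationals.
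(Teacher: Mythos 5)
Your proposal is correct and follows essentially the same route as the paper: induction on $n$ using the fundamental recurrence $p_n=(a_n/b_n)p_{n-1}+p_{n-2}$, with the identities $u_n=b_nu_{n-1}=b_nb_{n-1}u_{n-2}$ doing the bookkeeping that turns the cross term into the coefficient $b_nb_{n-1}$. The only cosmetic difference is that the paper also verifies $n=2$ explicitly as an extra base case, which is redundant given your two base cases since the inductive step already applies at $n=2$.
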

\begin{proof}
We prove the theorem by induction. We can directly verify the inductive basis. For $n=0$ and $n=1$ we have $p_0=\cfrac{a_0}{b_0}\ ,$ $q_0=1$ and $p_1=\cfrac{a_0}{b_0}\cdot\cfrac{a_1}{b_1}+1=\cfrac{a_0a_1+b_0b_1}{b_0b_1}\ ,$ $q_1=\cfrac{a_1}{b_1}\ ,$ so $p_0=\cfrac{s_0}{b_0u_0}\ ,$ $q_0=\cfrac{t_0}{u_0}$ and $p_1=\cfrac{s_1}{b_0u_1}\ ,$  $q_1=\cfrac{t_1}{u_1}\ .$ Finally, for $n=2$, it is easy to see that
$$p_2=\cfrac{a_0a_1a_2+a_2b_0b_1+a_0b_1b_2}{b_0b_1b_2}=\cfrac{a_2s_1+b_1b_2s_0}{b_0b_2u_1}=\cfrac{s_2}{b_0u_2}\ ,$$
and $$q_2=\cfrac{a_1a_2+b_1b_2}{b_1b_2}=\cfrac{a_2t_1+b_2b_1t_0}{b_1b_2}=\cfrac{t_2}{u_2}\ .$$
Now, if the thesis is true for any integer up to $n-1$, then for $n$ we have
$$p_n=\cfrac{a_n}{b_n}p_{n-1}+p_{n-2}=\cfrac{a_n}{b_n}\cdot\cfrac{s_{n-1}}{b_0u_{n-1}}+\cfrac{s_{n-2}}{b_0u_{n-2}}=\cfrac{a_ns_{n-1}u_{n-2}+b_nu_{n-1}s_{n-2}}{b_0b_nu_{n-1}u_{n-2}}=$$
$$=\cfrac{a_ns_{n-1}u_{n-2}+b_nb_{n-1}u_{n-2}s_{n-2}}{b_0u_nu_{n-2}}=\cfrac{u_{n-2}s_n}{b_0u_{n-2}u_n}=\cfrac{s_n}{b_0u_n}\ .$$
and similarly
$$q_n=\cfrac{a_n}{b_n}q_{n-1}+q_{n-2}=\cfrac{a_n}{b_n}\cdot\cfrac{t_{n-1}}{u_{n-1}}+\cfrac{t_{n-2}}{u_{n-2}}=\cfrac{a_nt_{n-1}u_{n-2}+b_nt_{n-2}u_{n-1}}{b_nu_{n-1}u_{n-2}}=$$
$$=\cfrac{a_nt_{n-1}u_{n-2}+b_{n-1}b_nt_{n-2}u_{n-2}}{u_{n-2}u_n}=\cfrac{t_n}{u_n}\ .$$
\end{proof}
\begin{oss}
Continued fractions with rational partial quotients have many interesting algebraic properties. In \cite{Acc}, the authors studied a 2--periodic continued fraction representing any quadratic irrationalities, showing that among its convergents there are at the same time Newton, Halley and secant approximations. In the following we will see that among the convergents of the continued fraction (\ref{fcredei}) we have at the same time Newton and Pad\`e approximations of $\sqrt{d}$.
\end{oss}
By the previous Lemma we find that the convergents of the continued fraction
$$\left[\ \overline{2z,\cfrac{2z}{d-z^2}}\ \right]$$
correspond to $\cfrac{\sigma_{n+2}}{\sigma_{n+1}}\ ,$ $n=0,1,2,\ldots$, where
$$(\sigma_n)_n=\mathcal W(0,1,2z,z^2-d)\ .$$
Thus, the convergents of (\ref{fcredei}) are equal to
$$\cfrac{\sigma_{n+2}}{\sigma_{n+1}}-z,\quad n=0,1,2,\ldots$$
and from the recurrence of the R\'edei polynomials we can observe that
$$\sigma_{n+1}-z\sigma_n=N_n(d,z), \quad \forall n\geq0$$
$$\sigma_n=D_n(d,z), \quad \forall n\geq0\ .$$
We summarize this result in the following
\begin{teo} \label{Q-conv}
Let $d$ be a positive integer not square, for every integer $z$ we have
$$\sqrt{d}=\left[\ z,\overline{\cfrac{2z}{d-z^2},2z}\ \right]$$
whose convergents are the R\'edei rational functions $Q_n(d,z)$, $\forall n\geq 1$.
\end{teo}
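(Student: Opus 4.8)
The statement bundles two claims: that the continued fraction (\ref{fcredei}) has value $\sqrt{d}$, and that its convergents are exactly the R\'edei rational functions $Q_n(d,z)$ for $n\geq 1$. The plan is to prove the second, sharper claim first and then obtain the value $\sqrt d$ for free from the limit $\lim_{n\to\infty}Q_n(d,z)=\sqrt d$ already recorded in the introduction.

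The first step is to strip off the non-periodic leading term. Writing $\alpha=\left[z,\overline{\frac{2z}{d-z^2},2z}\right]$ and $\beta=\left[\overline{2z,\frac{2z}{d-z^2}}\right]$, these two continued fractions differ only in their initial partial quotient ($z$ versus $2z$) and agree in every later term. Since adding a constant $c$ to $a_0$ adds $c$ to each finite convergent $[a_0,a_1,\dots,a_n]$, the $n$-th convergent of $\alpha$ is the $n$-th convergent of $\beta$ diminished by $z$. I would therefore compute the convergents of the purely periodic $\beta$ and subtract $z$ at the end.

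For $\beta$ I would invoke Lemma \ref{fcr-conv} with $a_n=2z$ for all $n$, and $b_n=1$ for $n$ even, $b_n=d-z^2$ for $n$ odd. The key simplification is that $b_nb_{n-1}=d-z^2$ for every $n\geq1$, so the auxiliary sequences $s_n$ and $t_n$ both obey $x_n=2zx_{n-1}+(d-z^2)x_{n-2}$, which is precisely the recurrence of the family $\mathcal W(\cdot,\cdot,2z,z^2-d)$. Matching initial values then identifies $t_n=\sigma_{n+1}$ and $s_n=\sigma_{n+2}$, where $(\sigma_n)_n=\mathcal W(0,1,2z,z^2-d)$; because $b_0=1$, the convergent $p_n/q_n=s_n/(b_0t_n)$ reduces to $\sigma_{n+2}/\sigma_{n+1}$, so the $n$-th convergent of $\alpha$ equals $\frac{\sigma_{n+2}}{\sigma_{n+1}}-z$.

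The last step is purely algebraic. Relation (\ref{recred}) gives $\sigma_n=D_n(d,z)$, and the linear combination $\sigma_{n+1}-z\sigma_n$ satisfies the same order-two recurrence with initial values $1$ and $z$, hence equals $N_n(d,z)$. Shifting the index, $\frac{\sigma_{n+2}}{\sigma_{n+1}}-z=\frac{\sigma_{n+2}-z\sigma_{n+1}}{\sigma_{n+1}}=\frac{N_{n+1}}{D_{n+1}}=Q_{n+1}(d,z)$, so the convergents of $\alpha$ run through $Q_1,Q_2,\dots$ exactly, and letting $n\to\infty$ with $\lim_nQ_n(d,z)=\sqrt d$ closes the argument. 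The only place demanding care is the bookkeeping in the first two steps — correctly pairing convergent indices after deleting the leading term and after the shifts $s_n=\sigma_{n+2}$, $t_n=\sigma_{n+1}$ — while everything else is a routine check of initial conditions against a single recurrence.
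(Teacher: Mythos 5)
Your proposal is correct and follows essentially the same route as the paper: it applies Lemma \ref{fcr-conv} to the purely periodic fraction $\left[\ \overline{2z,\frac{2z}{d-z^2}}\ \right]$, identifies its convergents with $\sigma_{n+2}/\sigma_{n+1}$ for $(\sigma_n)_n=\mathcal W(0,1,2z,z^2-d)$, subtracts $z$, and uses $\sigma_n=D_n$, $\sigma_{n+1}-z\sigma_n=N_n$ to recognize the R\'edei functions. Your index bookkeeping and initial-condition checks are accurate, and your explicit appeal to $\lim_n Q_n(d,z)=\sqrt d$ for the value of the fraction is if anything slightly more complete than the paper's own presentation.
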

\begin{teo} \label{np}
Let $k$ be a positive integer not square, then
\begin{enumerate}
\item $Q_{2^n}(d,z)$ are the Newton approximations of $\sqrt{d}$ with initial condition $z$, for every $n\geq0$;
\item $Q_{2n+1}(d,z)$ are the Pad\`e approximations of $\sqrt{d}$ centered in $z^2$ and of degree $n$, for every $n\geq0$.
\end{enumerate}
\end{teo}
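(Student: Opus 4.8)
The plan is to exploit the multiplicative identity $(z+\sqrt d)^n = N_n + D_n\sqrt d$ from (\ref{pow}), which converts statements about the index $n$ into algebraic relations among the $N_n,D_n$, and hence among the $Q_n$.

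For part (1) I would first derive a duplication formula. Squaring $(z+\sqrt d)^m = N_m + D_m\sqrt d$ and comparing rational and irrational parts gives $N_{2m}=N_m^2+dD_m^2$ and $D_{2m}=2N_mD_m$, whence
$$Q_{2m}(d,z)=\frac{N_m^2+dD_m^2}{2N_mD_m}=\frac12\left(Q_m(d,z)+\frac{d}{Q_m(d,z)}\right).$$
The right-hand side is exactly the Newton iteration map $x\mapsto\frac12(x+d/x)$ for the equation $x^2-d=0$. Since $Q_1(d,z)=N_1/D_1=z$ is the prescribed initial value, a one-line induction on $n$ (applying the duplication formula with $m=2^{n}$ to pass from $2^{n}$ to $2^{n+1}$) shows that $Q_{2^{n}}(d,z)$ is precisely the $n$-th Newton iterate starting from $z$. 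This settles (1).

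For part (2) the same identity together with its conjugate $(z-\sqrt d)^n=N_n-D_n\sqrt d$ yields the closed form
$$Q_n(d,z)=\sqrt d\,\frac{1+r^n}{1-r^n},\qquad r=\frac{z-\sqrt d}{z+\sqrt d}.$$
I would regard $\sqrt d$ as the function $\sqrt x$ evaluated at $x=d$ and expand everything in powers of $x-z^2$ about the centre $x=z^2$. Two facts must be checked. First, a degree count on the defining sums shows that both $N_{2n+1}(x,z)$ and $D_{2n+1}(x,z)$ are polynomials of degree $n$ in $x$, and evaluating at $x=z^2$ (where $r=0$) gives $D_{2n+1}(z^2,z)=(2z)^{2n}\neq0$, so $N_{2n+1}/D_{2n+1}$ is a legitimate $[n/n]$ rational function whose denominator does not vanish at the centre. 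Second, from the closed form,
$$Q_{2n+1}(d,z)-\sqrt d=\sqrt d\,\frac{2r^{2n+1}}{1-r^{2n+1}},$$
so the order of contact is controlled by the order of vanishing of $r$ at $x=z^2$. Since $r=0$ there while $\left.\frac{dr}{dx}\right|_{x=z^2}=-\frac{1}{4z^2}\neq0$, the factor $r^{2n+1}$ vanishes to exact order $2n+1$ and the remaining factor is analytic and nonzero there; hence $\sqrt x-N_{2n+1}/D_{2n+1}=O\bigl((x-z^2)^{2n+1}\bigr)$. By the uniqueness of the Padé approximant, a degree-$[n/n]$ rational function matching $\sqrt x$ to order $2n$ at $z^2$ must be the degree-$n$ Padé approximant, which is the claim.

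The routine ingredients—the duplication formula and the degree count—are immediate. I expect the only delicate point to be matching conventions: one must verify that the order of contact $2n+1$ produced by $r^{2n+1}$ is exactly the order required by the chosen definition of the degree-$n$ Padé approximant of $\sqrt x$, and that the nonvanishing of $D_{2n+1}$ at $z^2$ legitimizes the appeal to the uniqueness theorem. Once the error estimate and this normalization are in place, both parts follow with no further computation.
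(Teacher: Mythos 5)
Your proposal is correct and follows essentially the same route as the paper: part (1) rests on recognizing $Q_{2m}(d,z)=Q_2(d,Q_m(d,z))$ as the Newton map applied to $Q_m$ (you derive the duplication formula directly where the paper simply invokes the multiplicative property of the $Q_n$), and part (2) rests on the same error identity $Q_n(d,z)-\sqrt{d}=(z-\sqrt{d})^n/D_n(d,z)$, whose numerator vanishes to order $n$ at $d=z^2$. The extra checks you supply --- the degree count for $N_{2n+1},D_{2n+1}$ and the nonvanishing $D_{2n+1}(z^2,z)=(2z)^{2n}$ --- are details the paper's proof passes over silently, but they do not alter the argument.
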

\begin{proof}
In general, the Newton method for approximating $\alpha$, real root of $f(x)=bx^2-ax-c$, provides a sequence of rationales $x_n$, by the equation 
\begin{equation*}  x_n=x_{n-1}-\cfrac{f(x_{n-1})}{f'(x_{n-1})}=x_{n-1}-\cfrac{bx^2_{n-1}-ax_{n-1}-c}{2bx_{n-1}-a}=\cfrac{bx^2_{n-1}+c}{2bx_{n-1}-a}\ , \end{equation*}
with a suitable initial condition $x_0$.
 We obtain the Newton iterator for $\sqrt{k}$ when $b=1$, $a=0$, $c=k$. The initial condition $x_0=d$ gives
$$\begin{cases} x_0=d \cr x_n=\cfrac{x_{n-1}^2+k}{2x_{n-1}} \ .\end{cases} $$
We have to point out that
$$Q_1(d,z)=\cfrac{N_1(d,z)}{D_1(d,z)}=z,\quad Q_2(d,Q_1(d,z))=Q_2(d,z)=\cfrac{N_2(d,z)}{D_2(d,z)}=\cfrac{z^2+d}{2z}\ ,$$ 
where we used the multiplicative property of $Q_n(d,z)$. Observing that $Q_2(d,\cdot)$ coincides with the Newton iterator, we have
$$Q_{2^n}(d,z)=x_n, \quad \forall n\geq0\ .$$
Furthermore, we have a similar result for the Pad\`e approximations. In this regard, we consider $Q_n(d,z)$ as a function only of the variable $d$ and we think to $z$ as a fixed integer. Remembering that
$$N_n(d,z)-D_n(d,z)\sqrt{d}=(z-\sqrt{d})^n$$
we have
$$f_n(d)= Q_n(d,z)-\sqrt{d}=\cfrac{(z-\sqrt{d})^n}{D_n(d,z)}\ . $$
When we differentiate $f_n(d)$ with respect to $d$, the previous equality allows us to easily observe that $f^{(i)}(z^2)=0$ for $i=1,2,\ldots,n-1$. Thus, in the Taylor series of $f_{2n+1}(d)$ centered in $d=z^2$ the first $2n$ terms are zero. As a direct consequence, considering the definition of Pad\`e approximation (see, e.g., \cite{Bak}), we have that $Q_{2n+1}(d,z)$ are the Pad\`e approximations of $\sqrt{d}$ centered in $z^2$, corresponding to the ratios of polynomials of degree $n$.
\end{proof}

This theorem has a really interesting consequence. Using this result we can evaluate Newton approximations of square roots by power matrices. In particular the $n$th approximation of $\sqrt{d}$ is given by the ratio of the entries of the first column of
$$\begin{pmatrix} z & d \cr 1 & z  \end{pmatrix}^{2^n}.$$
In this way we can evaluate the $n$th term of the Newton iteration without the evaluation of all the previous steps, but we can directly obtain it in a fast way. An analogue observation is valid for the Pad\'e approximations of $\sqrt{d}$.

\section{p--adic approximations of square roots}

We have seen that the functions $Q_n(d,z)$ approximate $\sqrt{d}$, for every integer $z$. Now, we see that the parameter $z$ has a precise role if we consider approximations of $\sqrt{d}$ in the field $\mathbb Q_p$ of the $p$--adic numbers, instead of $\mathbb R$.\\
We recall that given a prime number $p$, the $p$--adic numbers are objects of the form
$$a_mp^m+a_{m+1}p^{m+1}+a_{m+2}p^{m+2}+...$$
for $0\leq a_i\leq p-1$ integer, $m$ integer, and they form a field \cite{p-adic} with respect to the two obvious operations.\\

\begin{teo}\label{teo:p-adic}
Let $p$ be a prime number and $z$ an integer such that $z^2\equiv d \mod p$. Then $Q_n(d,z)$'s converge to $\sqrt{d}$ in $\mathbb Q_p$.
\end{teo}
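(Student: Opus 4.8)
The plan is to reduce everything to the single identity already recorded in the proof of Theorem~\ref{np}, namely
$$Q_n(d,z)-\sqrt{d}=\frac{(z-\sqrt{d})^n}{D_n(d,z)}\ ,$$
and then to estimate the numerator and the denominator separately using the $p$--adic valuation $v_p$ (equivalently the absolute value $|\cdot|_p$). So the first step is to make sense of $\sqrt{d}$ inside $\mathbb{Q}_p$: since $z^2\equiv d\pmod p$ and, for $p$ odd with $p\nmid d$, the derivative $2z$ is a unit modulo $p$, Hensel's lemma lifts $z$ to a genuine square root of $d$ in $\mathbb{Z}_p$. I would fix $\sqrt{d}$ to be the root satisfying $\sqrt{d}\equiv z\pmod p$, the other root being $\equiv-z$.

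Next I would control the numerator, which is the easy half. By the choice $\sqrt{d}\equiv z\pmod p$ we get $v_p(z-\sqrt{d})\ge1$, hence $|z-\sqrt{d}|_p\le p^{-1}<1$ and therefore $|(z-\sqrt{d})^n|_p\le p^{-n}\to0$. The real content lies in showing that the denominator $D_n$ does not decay $p$--adically as well; otherwise the quotient need not converge.

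For the denominator I would exploit the closed form coming from the recurrence $(D_n)_n=\mathcal{W}(0,1,2z,z^2-d)$, whose characteristic roots are $z\pm\sqrt{d}$, namely
$$D_n=\frac{(z+\sqrt{d})^n-(z-\sqrt{d})^n}{2\sqrt{d}}\ .$$
Because $z+\sqrt{d}\equiv 2z\pmod p$ and $2z$ is a $p$--adic unit (this is exactly where the hypotheses $p$ odd and $p\nmid d$ are used), we have $|z+\sqrt{d}|_p=1$, so $|(z+\sqrt{d})^n|_p=1$ while $|(z-\sqrt{d})^n|_p<1$. The two terms then have distinct absolute values, and the ultrametric (strong triangle) equality yields $|(z+\sqrt{d})^n-(z-\sqrt{d})^n|_p=1$; dividing by $|2\sqrt{d}|_p=1$ gives $|D_n|_p=1$ for all large $n$.

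Combining the two estimates, $|Q_n(d,z)-\sqrt{d}|_p=|(z-\sqrt{d})^n|_p/|D_n|_p\le p^{-n}\to0$, which is precisely convergence in $\mathbb{Q}_p$. The main obstacle is exactly this denominator bound: the argument succeeds only because $z+\sqrt{d}$ is a unit, and this is what pins down the true role of the hypothesis $z^2\equiv d\pmod p$ (it forces $\sqrt{d}\equiv z$ and hence $z+\sqrt{d}\equiv 2z\not\equiv0$). I would finally remark that the excluded cases $p=2$ and $p\mid d$ are genuinely different: there $v_p(z+\sqrt{d})$ can be positive, and $\sqrt{d}$ may fail to lie in $\mathbb{Q}_p$ at all, so the clean cancellation of two terms of different valuation breaks down and the statement must be read for odd $p$ with $p\nmid d$.
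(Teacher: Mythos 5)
Your proof is correct, but it takes a genuinely different route from the paper's. The paper argues purely algebraically with integers: taking determinants in the matrix identity of Proposition~\ref{matrix-redei} gives $N_n^2-dD_n^2=(z^2-d)^n$, and since $p\mid z^2-d$ this is $\equiv 0 \pmod{p^n}$, whence $(N_n/D_n)^2\equiv d \pmod{p^n}$ and the convergents are $p$--adic approximations of $\sqrt{d}$. You instead work analytically inside $\mathbb{Q}_p$: Hensel's lemma to realize $\sqrt{d}$ with $\sqrt{d}\equiv z\pmod p$, the error formula $Q_n(d,z)-\sqrt{d}=(z-\sqrt{d})^n/D_n(d,z)$, the Binet expression for $D_n$, and the ultrametric equality to show $|D_n|_p=1$. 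Each approach buys something: the paper's is shorter and avoids any appeal to Hensel or to valuations of irrational quantities, but it silently divides by $D_n^2$ (i.e.\ assumes $D_n$ is a $p$--adic unit, which it never checks) and only shows $Q_n^2\to d$, leaving implicit which of the two square roots is the limit. Your argument supplies exactly these missing pieces --- the unit bound $|D_n|_p=1$ via the two terms of distinct valuation, the identification of the limit as the root congruent to $z$, and the explicit rate $|Q_n(d,z)-\sqrt{d}|_p\le p^{-n}$ --- and it also makes visible the hypotheses $p$ odd and $p\nmid d$ that the theorem's statement omits but genuinely needs (for $p\mid 2d$ the element $\sqrt{d}$ may not even lie in $\mathbb{Q}_p$). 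So your proof is not just an alternative but a repair of the paper's; the only cost is the extra machinery.
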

\begin{proof} If we consider that the congruence
$$x^2\equiv d \mod p,$$
$p$ a prime, has solutions, then there exists $z$ such that 
$$z^2-d=np$$
for some integer $n$. Since
$$\begin{pmatrix}z & d \cr 1 & z\end{pmatrix}^n=\begin{pmatrix}  N_n & dD_n \cr D_n & N_n \end{pmatrix},$$
we have
$$N_n^2-dD_n^2\equiv 0 \mod p^n,\quad \forall n\geq1$$
or equivalently
$$\left(\cfrac{N_n}{D_n}\right)^2\equiv d\mod p^n,\quad \forall n\geq1,$$
i.e., $Q_n(d,z)$'s converge to $\sqrt{d}$ in the field of the $p$--adic numbers. This is the same as saying that $Q_n(d,z)$ are $p$--adic approximations of $\sqrt{d}$.
\end{proof}

By Theorem \ref{teo:p-adic} it follows that the choice of a convenient parameter $z$ is essential in the field of the $p$--adic numbers. Moreover, we can use R\'edei rational functions in order to obtain Newton approximations in a $p$--adic sense similarly to the real case. Indeed, let us consider $\sqrt{d}\in\mathbb Q_p$, i.e.,
$$\sqrt{d}=\sum_{i=0}^{\infty}b_ip^i,$$
for $b_i\in\{0,1,...,p-1\}$, and let us set $z=b_0$ such that $z^2\equiv d \mod p$. Setting 
$$a_n=\sum_{i=0}^nb_ip^i,\quad \forall n\geq0$$
we have $a_n^2\equiv d\mod p^{n+1}$ and
$$a_n=a_{n-1}+b_np^n.$$
Since
$$a_n^2=a_{n-1}^2+b_n^2p^{2n}+2a_{n-1}b_np^n\equiv a_{n-1}^2+2a_{n-1}b_np^n \mod p^{n+1}$$
we finally have
$$a_n\equiv \cfrac{a_{n-1}^2+a_n^2}{2a_{n-1}}\mod p^{n+1}\equiv \cfrac{a_{n-1}^2+d}{2a_{n-1}}\mod p^{n+1}$$
and recalling Theorem \ref{np} we have
$$a_n\equiv Q_{2^n}(d,z) \mod p^{n+1}$$
which coincide with the Newton approximations over the field of $p$--adic numbers of $\sqrt{d}$. Furthermore, we can observe 

$$
a_n\equiv Q_{n+1}(d,z) \mod p^{n+1}
$$

since $p^{n+1}$ divides $N^2_{n+1}(d,z)-dD^2_{n+1}(d,z)$. These observations about the role of R\'edei rational functions in the field of the $p$--adic numbers allow us to conclude that we can use periodic continued fraction

\begin{equation} \label{cfp} 
\left[\ z,\overline{\cfrac{2z}{d-z^2},2z}\ \right]  
\end{equation}

in order to give periodic representations of square roots in $\mathbb Q_p$. These continued fractions represent square roots in $\mathbb Q_p$ though they are not provided by a specific algorithm. However, it is interesting to observe that for some particular case the continued fraction (\ref{cfp}) coincides with the continued fraction obtained from an algorithm presented in \cite{Moore}.
\begin{es}
Let us consider $\sqrt{26}\in\mathbb Q_{229}$. It is possible to check that $x^2\equiv 26\mod {229}$ has $22$ as solution. So we take $z=22$ in (\ref{cfp}) and we obtain
$$
\sqrt{26}=\left[\ 22,\overline{-\cfrac{22}{229},44}\ \right]$$
which coincide with the expansion provided in \cite{Moore} (p. 17). Here it follows immediately that the continued fraction converge in a real sense too.
\end{es}
Finally, it is interesting to observe that when $z$ is such that $z^2\equiv d \mod p$ the continued fraction (\ref{cfp}) converges to $\sqrt{d}$ both in a real and in a $p$--adic sense and R\'edei rational functions $Q_n(d,z)$ provide simultaneously real and $p$--adic approximations of $\sqrt{d}$.

\end{document}